\documentclass[12pt]{amsart}
\usepackage{amssymb}
\usepackage{verbatim}
\usepackage[usenames]{color}
\usepackage{hyperref}
\usepackage{url}
\usepackage[all]{xy}

\newtheorem{theorem}{Theorem}[section]
\newtheorem{proposition}[theorem]{Proposition}

\newtheorem{maintheorem}[theorem]{Main Theorem}
\newtheorem{cor}[theorem]{Corollary}

\theoremstyle{definition}
\newtheorem{definition}[theorem]{Definition}

\theoremstyle{remark}




\newcommand{\zf}{\textrm{ZF}}
\newcommand{\ch}{\textnormal{CH}}
\newcommand{\m}{-}

\newcommand{\dom}{\textnormal{Dom}}

\newcommand{\cf}{\textnormal{cf}}

\newcommand{\mf}{\mathfrak}
\newcommand{\mc}{\mathcal}
\newcommand{\mbb}{\mathbb}

\newcommand{\p}{\mathcal{P}}

\newcommand{\add}{\textnormal{add}}

\title[A Lower Bound for Generalized Dominating Numbers]
 {A Lower Bound for \\ Generalized Dominating Numbers}
\author{Dan Hathaway}
\address{Mathematics Department\\
University of Michigan\\
Ann Arbor, MI 48109--1043, U.S.A.}
\email{danhath@umich.edu}
\thanks{A portion of the results of this paper were proven during the September 2012 Fields Institute Workshop on Forcing while D.H. was supported by the Fields Institute. }


\begin{document}

\begin{abstract}
We show a new proof
 for the fact that when
 $\kappa$ and $\lambda$ are infinite cardinals
 satisfying $\lambda ^ \kappa = \lambda$,
 the cofinality of the set of all
 functions from $\lambda$ to $\kappa$ ordered by
 everywhere domination is $2^\lambda$.
An earlier proof was a
 consequence of a result
 about independent families of functions.
The new proof follows directly
 from the main theorem we present:
 for every $A \subseteq \lambda$
 there is a function $f: {^\kappa \lambda} \to \kappa$
 such that whenever $M$ is a transitive model of $\zf$
 such that ${^\kappa \lambda} \subseteq M$
 and some $g: {^\kappa \lambda} \to \kappa$ in $M$ dominates $f$,
 then $A \in M$.
That is,
 ``constructibility can be reduced to domination''.
\end{abstract}

\maketitle

\section{Introduction}

Given a partially ordered set $\mbb{P} = \langle P, \le_P \rangle$,
 let $\cf(\mbb{P})$ be the cofinality of $\mbb{P}$.
That is,
 $\cf(\mbb{P}) = \min \{ |A| : A \subseteq P$ and
 $(\forall p \in P)(\exists a \in A)\, p \le_P a \}$.
Let $\kappa$ and $\lambda$ be infinite cardinals.
Regarding $\kappa$ as a partially ordered set
 $\langle \kappa, \le \rangle$,
 it is natural to wonder about the structure
 of the product of this ordering with itself
 $\lambda$ many times.
This is the same as the partially
 ordered set $\langle {^\lambda \kappa} , \le \rangle$
 of all functions from $\lambda$ to $\kappa$,
 ordered by $$
 (\forall f,g \in {^\lambda \kappa})\,\big{[} f \le g \iff
 (\forall \alpha < \lambda) f(\alpha) \le g(\alpha)\big{]}.$$
This is referred to as the
 $\textit{everywhere domination}$ ordering.
When $f \le g$, for brevity we will just say $g$ dominates $f$.
We will often write ${^\lambda \kappa}$ instead of
 $\langle {^\lambda \kappa}, \le \rangle$ when
 no confusion should arise.

Of course, we could be ``more general'' and consider
 the everywhere domination ordering of
 all functions from an arbitrary set $X$ to $\kappa$,
 but that is isomorphic
 to $\langle {^{|X|} \kappa, \le} \rangle$.
Nevertheless, it will sometimes be more
 convenient notationally to have the domains be
 arbitrary sets rather than
 cardinals, so we will do this freely.

We may want to investigate the cofinality
 $\cf({^\lambda \kappa})$ of
 the partially ordered set ${^\lambda \kappa}$.
Without loss of generality, we can take
 $\kappa$ to be regular.
When $\kappa$ is regular and $> \lambda$,
 it is clear that
 $\cf({^\lambda \kappa}) = \kappa$
 (because the set of constant functions is cofinal).
Thus, we might as well assume
 $\kappa \le \lambda$.
When this happens, of course
 $\cf( {^\lambda \kappa} ) \le
 |{^\lambda \kappa}| = 2^\lambda$.


There is also the partial ordering
 $\langle {^\lambda \kappa}, \le^* \rangle$
 of all functions from $\lambda$ to $\kappa$
 by \textit{eventual domination},
 where $f \le^* g$ iff
 $$ (\exists \alpha < \lambda)
    (\forall \beta \ge \alpha)\,
    f(\beta) \le g(\beta).$$
In the literature, this is often investigated more
 than everywhere domination, so we will spend a few
 moments to explain how the two notions are connected.
Of course, $$\cf \langle {^\lambda \kappa}, \le^* \rangle
 \le \cf \langle {^\lambda \kappa}, \le \rangle.$$
A straightforward diagonalization shows
 $$\lambda^+ \le \cf \langle {^\lambda \kappa}, \le^* \rangle.$$
Notice that for a regular $\lambda$,
 $$\cf \langle {^\lambda \lambda}, \le^* \rangle =
 \cf \langle {^\lambda \lambda}, \le \rangle,$$
 because if we take a set $\mc{F}$ cofinal in
 $\langle {^\lambda \lambda}, \le^* \rangle$ and replace each
 $f \in \mc{F}$ with the set of functions of the form
 $$\alpha \mapsto \max \{ f(\alpha), \beta \}$$
 for some $\beta < \lambda$,
 we get a set cofinal in
 $\langle {^\lambda \lambda}, \le \rangle$
 of size $|\mc{F}|$.
However, the same trick \textit{cannot} be used to argue
 $\cf \langle {^\lambda \kappa}, \le^* \rangle =
  \cf \langle {^\lambda \kappa}, \le \rangle$
 when $\lambda$ is a cardinal and $\kappa$
 is some regular cardinal $< \lambda$.
There are other relationships between everywhere
 domination and eventual domination, for example
 when $\lambda$ is regular,
 $$\cf \langle {^\lambda \kappa}, \le \rangle =
 \cf \langle {^\lambda \kappa}, \le^* \rangle \cdot
 \sum_{\alpha < \lambda}
 \cf \langle {^\alpha \kappa}, \le \rangle.$$
Also, for any $\lambda$,
 $$\cf \langle {^\lambda \kappa}, \le \rangle
 \le \cf \langle {^{\lambda^+} \kappa}, \le^* \rangle$$
 (more generally, for any $\mu \ge \lambda^+$,
 $\cf \langle {^\lambda \kappa}, {\le} \rangle \le
  \cf \langle {^\mu \kappa}, {\le^*} \rangle$).
Putting the above two relationships together,
 $$\cf \langle {^{\lambda^+} \kappa}, \le \rangle =
 \cf \langle {^{\lambda^+} \kappa}, \le^* \rangle.$$
For results about alternative notions of eventual domination,
 see \cite{Monk}.

The case where $\kappa = \lambda$
 is investigated in
 \cite{Cummings}
 by Cummings and Shelah.
They show, as part of a more general result,
 that for a regular $\lambda$
 satisfying $\lambda^{<\lambda} = \lambda$,
 there is a
 $\lambda$-closed and $\lambda^+$-c.c.\ 
 forcing which forces
 $\cf( {^\lambda \lambda} ) < 2^\lambda$.

In \cite{Szymanski},
 Szyma\'nski
 determines whether
 $\cf({^\lambda \kappa}) = 2^\lambda$
 in all cases where $\kappa$ is regular,
 $\kappa \le \lambda < 2^\omega$,
 and $2^\omega$ is real-valued measurable.
Specifically,
 he shows that under these assumptions,
 $\cf( {^\omega \omega} ) < 2^\omega$,
 $\cf( {^\lambda \omega} ) = 2^\lambda$
 when $\omega < \lambda$, and
 $\cf( {^\lambda \kappa} ) < 2^\lambda$
 when $\omega < \kappa \le \lambda$.

In \cite{Jech1},
 Jech and Prikry
 essentially show that whenever
 $I$ is a $\kappa^+$-complete ideal on $\lambda$
 and if there is a family $\mc{F}$ of
 pairwise $I$-disjoint functions from
 $\lambda$ to $\kappa$, then
 $|\mc{F}| \le \cf({^\lambda \kappa})$.
They then show various situations in which
 one might have such a family.
They also begin to investigate the case when
 $2^\omega$ is real-valued measurable.

A slightly different approach than using
 a family of
 $I$-almost disjoint functions for some ideal $I$
 is to use a family of
 sufficiently independent functions.
We illustrate this connection in the next
 section, and show the consequence that
 if $\lambda^\kappa = \lambda$,
 then $\cf({^\lambda \kappa}) = 2^\lambda$.
In the section after that, we present our
 main theorem which helps to clarify
 the nature of the domination relation
 of functions from $\lambda$ to $\kappa$
 when $\lambda^\kappa = \lambda$.

\section{Independent Families of Functions}

\begin{definition}
Let $\lambda$, $\kappa$, and $\nu$ be infinite cardinals.
A family $\mc{F} \subseteq {^\lambda \kappa}$
 is said to be $\nu$-independent iff
 $$(\forall F \in [\mc{F}]^{<\nu})
 (\forall \varphi: F \to \kappa)
 (\exists x \in \lambda)
 (\forall f \in F)\,
 f(x) = \varphi(f).$$
\end{definition}

We will now recall an old result.
For the sake of this paragraph,
 let $I(\lambda, \kappa, \nu, \mu)$
 be the statement ``there exists a
 $\nu$-independent family $\mc{F} \subseteq {^\lambda \kappa}$
 of size $\mu$''.
$I(\omega, 2, \omega, 2^\omega)$ and
$I(2^\omega, 2, \omega, 2^{2^\omega})$ were both shown
 in \cite{FK}.
For arbitrary infinite $\lambda$,
 $I(\lambda, 2, \omega, 2^\lambda)$ was shown
 in \cite{Hausdorff}.
For infinite cardinals $\lambda$ and $\kappa$ such that
 $2^{<\kappa} \le \lambda$,
 $I(\lambda, 2, \kappa, 2^\lambda)$ was shown
 in \cite{Tarski}.
Finally,
 for infinite cardinals $\lambda$ and $\kappa$ such that
 $\lambda^{<\kappa} = \lambda$,
 $I(\lambda, \lambda, \kappa, 2^\lambda)$ was shown
 in \cite{EK}.
We state this last result as the theorem below.
For a proof of this theorem, see
 (a) $\Rightarrow$ (d) of Theorem 3.16 in \cite{CN}.
See also the end of Chapter 3 in \cite{CN}
 for more information.

\begin{theorem}
\label{ind_family}
If $\lambda^\kappa = \lambda$,
 then there is a
 $\kappa^+$-independent family
 of $2^\lambda$ functions from $\lambda$ to $\kappa$.
More generally, if
 $\lambda^{<\kappa} = \lambda$,
 then there is a
 $\kappa$-independent family
 of $2^\lambda$ functions from $\lambda$ to $\kappa$.
\end{theorem}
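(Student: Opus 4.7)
The plan is a direct combinatorial construction in the spirit of Hausdorff and Fichtenholz–Kantorovich. Since the order $\langle {^\lambda \kappa}, \le \rangle$ depends only on $|\lambda|$, it suffices to exhibit, for some convenient index set $X$ of cardinality $\lambda$, a $\kappa^+$-independent family of $2^\lambda$ functions $X \to \kappa$ naturally parametrized by $\p(\lambda)$.

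First I would take
$$X = \{(s, P, \varphi) : s \in [\lambda]^{\le \kappa},\ P \in [\p(s)]^{\le \kappa},\ \varphi: P \to \kappa\}$$
and verify $|X| = \lambda$. From $\lambda^\kappa = \lambda$ one gets $|[\lambda]^{\le \kappa}| = \lambda$ and $2^\kappa \le \lambda^\kappa = \lambda$; thus for each fixed $s$ we have $|\p(s)| \le 2^\kappa \le \lambda$, so $|[\p(s)]^{\le \kappa}| \le \lambda^\kappa = \lambda$ and there are at most $\kappa^\kappa \le \lambda$ choices of $\varphi$. The total is at most $\lambda$.

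Second, for each $A \subseteq \lambda$ I would define $f_A : X \to \kappa$ by
$$f_A(s, P, \varphi) = \begin{cases} \varphi(A \cap s) & \text{if } A \cap s \in P, \\ 0 & \text{otherwise.}\end{cases}$$
Distinctness of the $f_A$ is immediate: for $A \ne A'$ pick $x \in A \triangle A'$ and evaluate at a triple with $s = \{x\}$ and $P = \p(s)$, separating $\emptyset$ from $\{x\}$ via $\varphi$.

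The heart of the argument is $\kappa^+$-independence. Given pairwise distinct $\{A_i : i \in J\}$ with $|J| \le \kappa$ and a target $(c_i)_{i \in J} \in {^J \kappa}$, for each pair $i \ne j$ pick a witness $x_{ij} \in A_i \triangle A_j$ and set $s = \{x_{ij} : i \ne j\}$; then $|s| \le \kappa$ and the restrictions $A_i \cap s$ are pairwise distinct in $\p(s)$, because $x_{ij}$ lies in exactly one of $A_i \cap s$ and $A_j \cap s$. Taking $P = \{A_i \cap s : i \in J\}$ and defining $\varphi(A_i \cap s) := c_i$ gives $f_{A_i}(s, P, \varphi) = c_i$ for every $i \in J$. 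The main obstacle is carving $X$ finely enough to realize every $<\kappa^+$-pattern while still keeping $|X| \le \lambda$; both demands rest on the bound $2^\kappa \le \lambda$, supplied by the hypothesis $\lambda^\kappa = \lambda$. The more general statement under $\lambda^{<\kappa} = \lambda$ follows by replacing $\le \kappa$ with $< \kappa$ throughout, using $2^{<\kappa} \le \lambda^{<\kappa} = \lambda$ in place of $2^\kappa \le \lambda$.
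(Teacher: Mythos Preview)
Your construction is correct and is precisely the classical Engelking--Kar{\l}owicz argument. Note, however, that the paper does not actually prove this theorem: it quotes the result from the literature and refers the reader to (a) $\Rightarrow$ (d) of Theorem~3.16 in \cite{CN} for a proof. What you have written is essentially that cited proof, so there is nothing to compare---your argument supplies the details the paper deliberately omitted.

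A couple of small remarks on presentation. First, in the cardinality estimate for $X$ you invoke $|[\p(s)]^{\le \kappa}| \le \lambda^\kappa$; it is slightly cleaner to observe $|[\p(s)]^{\le \kappa}| \le (2^\kappa)^\kappa = 2^\kappa \le \lambda$, which avoids the detour through $\lambda$. Second, for the ``more general'' clause under $\lambda^{<\kappa}=\lambda$, when you bound $|s|$ you are using that $|J|<\kappa$ implies $|J\times J|<\kappa$; this is fine because $\kappa$ is an infinite cardinal, but it is worth saying explicitly, and likewise that $\kappa\le\lambda$ (which follows from $\lambda^{<\kappa}=\lambda$, since $\kappa>\lambda$ would give $\lambda^{<\kappa}\ge\lambda^\lambda>\lambda$), so that the union bound $|[\lambda]^{<\kappa}|\le\kappa\cdot\lambda=\lambda$ goes through. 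None of this affects correctness.
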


Immediately, this gives us the desired bound:

\begin{cor}
\label{main_cor}
If $\lambda^\kappa = \lambda$,
 then $\cf({^\lambda \kappa}) = 2^\lambda$.
\end{cor}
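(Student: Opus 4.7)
My plan is to apply Theorem \ref{ind_family} directly. The upper bound $\cf({^\lambda\kappa}) \le 2^\lambda$ is immediate, since the hypothesis $\lambda^\kappa = \lambda$ forces $\kappa \le \lambda$, giving $|{^\lambda\kappa}| = \kappa^\lambda \le 2^\lambda$. For the lower bound I would fix an arbitrary cofinal family $\mc{G} \subseteq {^\lambda\kappa}$, invoke Theorem \ref{ind_family} to obtain a $\kappa^+$-independent family $\mc{F} \subseteq {^\lambda\kappa}$ of size $2^\lambda$, and assign to each $f \in \mc{F}$ a dominator from $\mc{G}$. Writing $\mc{F}_g = \{f \in \mc{F} : f \le g\}$, cofinality of $\mc{G}$ yields $\mc{F} = \bigcup_{g \in \mc{G}} \mc{F}_g$, so the task reduces to bounding each $|\mc{F}_g|$.

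The main step will be to show $|\mc{F}_g| < \kappa$ for every $g \in \mc{G}$. I would argue by contradiction: if some $\mc{F}_g$ contained a subfamily $F = \{f_\eta : \eta < \kappa\}$ of size $\kappa$, then applying $\kappa^+$-independence to $\varphi(f_\eta) := \eta$ would produce some $x \in \lambda$ with $f_\eta(x) = \eta$ for every $\eta < \kappa$; but $f_\eta \le g$ then forces $\eta \le g(x)$ for every $\eta < \kappa$, which is absurd since $g(x) < \kappa$. Once this bound is in hand, $2^\lambda = |\mc{F}| \le |\mc{G}| \cdot \kappa$ together with $\kappa \le \lambda < 2^\lambda$ forces $|\mc{G}| \ge 2^\lambda$, completing the lower bound.

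I do not anticipate a genuine obstacle, since Theorem \ref{ind_family} is doing all the heavy lifting and the reduction above is short. The single point to watch is that $\varphi$ must have image cofinal in $\kappa$, so that the inequality $\varphi(f) \le g(x) < \kappa$ becomes contradictory; a bijection $F \to \kappa$ is the cleanest choice. Should one prefer not to assume $\kappa$ is regular, taking $|F| = \cf(\kappa)$ and letting $\varphi$ enumerate a cofinal sequence would yield the sharper bound $|\mc{F}_g| < \cf(\kappa)$, with the concluding cardinal arithmetic unchanged.
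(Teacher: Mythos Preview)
Your proposal is correct and follows essentially the same argument as the paper: both invoke Theorem~\ref{ind_family} to produce a $\kappa^+$-independent family $\mc{F}$ of size $2^\lambda$, observe that no single $g$ can dominate $\kappa$ many members of $\mc{F}$ (your explicit choice $\varphi(f_\eta)=\eta$ is exactly how one unpacks the paper's ``every size $\kappa$ subset is unbounded''), and then pigeonhole to conclude any cofinal family has size $2^\lambda$. Your write-up is a bit more explicit about the upper bound and the independence step, but the two proofs are the same in substance.
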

\begin{proof}
Assuming $\lambda^\kappa = \lambda$,
 let $\mc{F}$ be a
 $\kappa^+$-independent
 family of $2^\lambda$ functions from $\lambda$ to $\kappa$.
Since this family is $\kappa^+$-independent,
 every size $\kappa$ subset is unbounded.
Suppose, towards a contradiction,
 that there is some size $< 2^\lambda$
 family $\mc{D} \subseteq {^\lambda \kappa}$
 that is cofinal in ${^\lambda \kappa}$.
Note that $\kappa < 2^\lambda$.
By the pigeon hole principle,
 there is some $g \in \mc{D}$ which dominates
 at least $\kappa$ members of $\mc{F}$.
This contradicts every size $\kappa$
 subset of $\mc{F}$ being unbounded.
\end{proof}

This result was probably known, but the author
 could find no reference for it.
An equivalent and amusing way to write this
 result is as follows:
 $$\cf({^{(\lambda^\kappa)} \kappa}) = 2^{\lambda^\kappa}.$$

As a special case of this corollary,
 we have the following:
\begin{cor}
The cofinality of the set of all functions
 from $\mbb{R}$ to $\omega$ ordered by everywhere
 domination is $2^{2^{\omega}}$.
\end{cor}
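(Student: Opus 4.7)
The plan is to deduce this directly from Corollary \ref{main_cor} by specializing the parameters to $\lambda = 2^\omega$ and $\kappa = \omega$. The very first step is to recall the observation made early in the introduction that for any set $X$, the partially ordered set of all functions from $X$ to $\kappa$ under everywhere domination is isomorphic to $\langle {^{|X|}\kappa}, \le \rangle$. Since $|\mbb{R}| = 2^\omega$, the cofinality I want is $\cf({^{(2^\omega)}\omega})$.

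Next I would verify the cardinal arithmetic hypothesis of Corollary \ref{main_cor}, namely $\lambda^\kappa = \lambda$ with $\lambda = 2^\omega$ and $\kappa = \omega$. This is the standard computation $(2^\omega)^\omega = 2^{\omega \cdot \omega} = 2^\omega$, requiring no choice or additional axioms beyond $\zfc$. With the hypothesis verified, Corollary \ref{main_cor} immediately gives $\cf({^{(2^\omega)}\omega}) = 2^{2^\omega}$, which is the desired conclusion.

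I do not expect any genuine obstacle here: the result is a pure specialization, and the only substantive content is the cardinal identity $(2^\omega)^\omega = 2^\omega$, which is textbook. If anything, the only thing worth being careful about is noting explicitly that the domain of the functions can be replaced by a cardinal of the same size without changing the cofinality of the everywhere-domination order, so that the hypothesis of Corollary \ref{main_cor} (which is stated for functions whose domain is a cardinal) applies.
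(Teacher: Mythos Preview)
Your proposal is correct and matches the paper's approach exactly: the paper presents this corollary without proof, introducing it simply as ``a special case of this corollary'' (i.e., of Corollary~\ref{main_cor}), which is precisely the specialization $\lambda = 2^\omega$, $\kappa = \omega$ you carry out. The points you flag---reducing the domain $\mbb{R}$ to the cardinal $2^\omega$ and checking $(2^\omega)^\omega = 2^\omega$---are the only details needed, and both are handled correctly.
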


The last corollary
 gives us a different proof
 of a well-known result
 (which is attributed to Kunen
 in \cite{Jech1}):
\begin{cor}
$\ch$ implies
 $\cf({^{\omega_1}}\omega) = 2^{\omega_1}$.
\end{cor}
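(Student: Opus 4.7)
The plan is to apply Corollary \ref{main_cor} with $\lambda = \omega_1$ and $\kappa = \omega$. To do so, the only thing that needs checking is the hypothesis $\lambda^\kappa = \lambda$, i.e.\ that $\omega_1^\omega = \omega_1$ holds under $\ch$.

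First I would observe that $\omega_1 \le \omega_1^\omega$ trivially (from constant functions, or just cardinal arithmetic). For the reverse inequality, under $\ch$ we have $2^\omega = \omega_1$, so
$$\omega_1^\omega = (2^\omega)^\omega = 2^{\omega \cdot \omega} = 2^\omega = \omega_1.$$
Thus $\omega_1^\omega = \omega_1$, and Corollary \ref{main_cor} applies with $\lambda = \omega_1$ and $\kappa = \omega$ to give $\cf({^{\omega_1}\omega}) = 2^{\omega_1}$.

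There really is no obstacle here: the corollary is immediate once the cardinal arithmetic identity $\omega_1^\omega = \omega_1$ is verified under $\ch$. The only thing worth remarking in the write-up is which of the standard cardinal-arithmetic rules one is invoking (namely $(2^\mu)^\nu = 2^{\mu \cdot \nu}$ and $\omega \cdot \omega = \omega$), but these are entirely routine and the whole argument can be presented in two or three lines.
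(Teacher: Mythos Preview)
Your proposal is correct and is essentially the paper's own approach: the paper does not write out a proof but simply notes that this follows from the preceding corollary (that $\cf({^{\mathbb{R}}\omega}) = 2^{2^\omega}$), which is itself just Corollary~\ref{main_cor} with $\lambda = 2^\omega$, $\kappa = \omega$; under $\ch$ this specializes exactly to your computation $\omega_1^\omega = (2^\omega)^\omega = 2^\omega = \omega_1$.
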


There is a more general
 consequence of the theorem above
 which can be stated using the language
 of challenge response relations
 (see section 4 of \cite{Blass1}):
\begin{proposition}
\label{conjunct_prop}
Suppose $\mc{R} = \langle R_-, R_+,R \rangle$
 is a challenge response relation.
Suppose $\kappa = ||\mc{R}^\perp||$.
Let $\lambda$ be a cardinal such that
 $\lambda^\kappa = \lambda$.
Let $\tilde{\mc{R}} :=
 \langle {^\lambda R_-}, {^\lambda R_+}, \tilde{R} \rangle$
 be the conjunction of $\mc{R}$ with itself $\lambda$
 many times.
That is,
 $f \tilde{R} g$ iff $(\forall x \in \lambda)\, f(x) R g(x)$.
Then $||\tilde{\mathcal{R}}|| = 2^\lambda$.
In fact, there is a set
 $\mc{F} \subseteq {^\lambda R_-}$ of size $2^\lambda$ such that
 for every size $\kappa$ subset $A'$ of $\mc{F}$,
 there is no
 $g \in {^\lambda R_+}$ such that
 $(\forall f \in A')\, f \tilde{R} g$.
\end{proposition}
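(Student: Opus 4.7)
The plan is to lift a $\kappa^+$-independent family of functions in ${^\lambda\kappa}$ to a family in ${^\lambda R_-}$ using an unbounded witness for $\mc{R}^\perp$. First I would fix a set $A = \{a_i : i < \kappa\} \subseteq R_-$ of size $\kappa$ realizing $||\mc{R}^\perp|| = \kappa$, so that for every $b \in R_+$ there is some $i < \kappa$ with $\neg(a_i R b)$. Next, since $\lambda^\kappa = \lambda$, Theorem \ref{ind_family} supplies a $\kappa^+$-independent family $\{h_\alpha : \alpha < 2^\lambda\} \subseteq {^\lambda\kappa}$, and I would set $f_\alpha(x) := a_{h_\alpha(x)}$ and $\mc{F} := \{f_\alpha : \alpha < 2^\lambda\}$. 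Pairwise distinctness of the $h_\alpha$ together with injectivity of $i \mapsto a_i$ then gives $|\mc{F}| = 2^\lambda$.

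For the ``in fact'' clause, I would assume for contradiction that some $\kappa$-sized subfamily $A' = \{f_{\alpha_j} : j < \kappa\} \subseteq \mc{F}$ (with the $\alpha_j$ distinct) admits a common responder $g \in {^\lambda R_+}$, so that $f_{\alpha_j}(x) R g(x)$ for every $j < \kappa$ and every $x \in \lambda$. Applying $\kappa^+$-independence to the size-$\kappa$ subset $\{h_{\alpha_j} : j < \kappa\}$ with the assignment $h_{\alpha_j} \mapsto j$ would produce a single $x \in \lambda$ at which $h_{\alpha_j}(x) = j$ for every $j < \kappa$. Then $f_{\alpha_j}(x) = a_j$ and hence $a_j R g(x)$ holds for every $j < \kappa$ with a single fixed $g(x) \in R_+$, contradicting the unboundedness of $A$.

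The norm equality $||\tilde{\mc{R}}|| = 2^\lambda$ then follows cleanly. For the lower bound I would use that $\kappa < 2^\lambda$ (a consequence of $2^\kappa \le \lambda^\kappa = \lambda$): by the ``in fact'' clause, each element of any cofinal $\mc{G} \subseteq {^\lambda R_+}$ dominates fewer than $\kappa$ members of $\mc{F}$, so pigeonhole forces $|\mc{G}| \ge 2^\lambda$. The matching upper bound comes from the standard pointwise construction applying a dominating set for $\mc{R}$ coordinatewise. I expect the only subtle step to be verifying that the assignment $h_{\alpha_j} \mapsto j$ is a well-defined function, which rests on distinctness of the $h_{\alpha_j}$ (equivalently, of the $\alpha_j$); everything else is bookkeeping.
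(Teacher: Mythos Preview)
Your proposal is correct and follows essentially the same approach as the paper: both fix an unbounded $\kappa$-set $\{a_\alpha : \alpha < \kappa\} \subseteq R_-$, compose it with a $\kappa^+$-independent family from Theorem~\ref{ind_family} to produce $\mc{F}$, and then use independence to find a single coordinate $x$ at which any $\kappa$-sized subfamily replicates all of the $a_\alpha$'s. Your write-up simply makes explicit the composition $f_\alpha = a_{h_\alpha(\cdot)}$ and the pigeonhole derivation of the norm lower bound that the paper leaves implicit; note that the paper's proof, like yours, does not really justify the upper bound $||\tilde{\mc{R}}|| \le 2^\lambda$, which needs some size hypothesis on $\mc{R}$ beyond what is stated.
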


\begin{proof}
Let $A = \{ a_\alpha : \alpha < \kappa \}
 \subseteq R_-$ be a set of size $\kappa$
 such that there is no single $b \in R_+$
 such that $(\forall \alpha < \kappa)\, a_\alpha R b$.
Using Theorem~\ref{ind_family},
 we obtain a set
 $\mc{F} = \{ f_\beta : \beta < 2^\lambda\}
 \subseteq {^\lambda R_-}$ of size $2^\lambda$
 such that for every injection
 $i : \kappa \to 2^\lambda$,
 there exists an $x \in \lambda$ such that
 $$(\forall \alpha < \kappa)\,
 f_{i(\alpha)}(x) = a_\alpha.$$
The set $\mc{F}$ is as desired.
\end{proof}

\section{The Main Theorem}

Let $\kappa$ and $\lambda$ be infinite cardinals.
The theorem below can be remembered as
 ``for every $A \subseteq \lambda$ there is a function
 $f$ from ${^\kappa \lambda}$ to $\kappa$ such that
 $A$ is constructible from ${^\kappa \lambda}$ and any
 $g$ which dominates $f$''.

\begin{maintheorem}
\label{maintheorem}
For every $A \subseteq \lambda$ there is a
 function $f: {^\kappa \lambda} \to \kappa$
 such that whenever $M$ is a
 transitive model of $\zf$ such that
 ${^\kappa \lambda} \subseteq M$
 and some $g : {^\kappa \lambda} \to \kappa$ in $M$
 dominates $f$,
 then $A \in M$.
\end{maintheorem}
\begin{proof}
Fix $A \subseteq \lambda$.
Define $f$ by
 $$f(x) := \begin{cases}
 0 & \mbox{if } (\forall \alpha < \kappa)\, x(\alpha) \not\in A, \\
 \alpha + 1 &\mbox{if } x(\alpha) \in A \mbox{ but }
 (\forall \beta < \alpha)\, x(\beta) \not\in A.
 \end{cases}$$
Let $M$ be a transitive model of $\zf$
 such that ${^\kappa \lambda} \subseteq M$ and $A \not \in M$.
Suppose, towards a contradiction, that there is some
 $g \in M$ that dominates $f$.
Let $B$ be the set
 $$B := \{ t \in {^{<\kappa}\lambda} :
 g(x) \ge \dom(t) \mbox{ for all } x \mbox{ extending } t\}.$$
Notice that $B \in M$.

For all $a \in \lambda$,
 $a \in A$ implies $\langle a \rangle \in B$.
Thus, there must be some $a_0 \in \lambda$ such that
 $a_0 \not\in A$ but $\langle a_0 \rangle \in B$.
If there was not, then $A$ could be defined in $M$ by
 $A = \{ a \in \lambda : \langle a \rangle \in B\}$,
 which would contradict the fact that $A \not\in M$.

Next, for all $a \in \lambda$,
 $a \in A$ implies $\langle a_0, a \rangle \in B$.
Thus, by similar reasoning as before,
 there must be some $a_1 \in \lambda$ such that
 $a_1 \not\in A$ but $\langle a_0, a_1 \rangle \in B$.
Continuing like this, we can construct a sequence
 $x \in {{^\kappa}\lambda}$ such that
 $(\forall \alpha < \kappa)\, x \restriction \alpha \in B$.
This means that $(\forall \alpha < \kappa)\, g(x) \ge \alpha$.
This contradicts $g$ being well-defined at $x$.
\end{proof}

For demonstration purposes,
 we show how this also
 implies Corollary~\ref{main_cor}:

\begin{cor}
If $\lambda^\kappa = \lambda$, then
 $\cf( {^\lambda \kappa} ) = 2^\lambda$.
\end{cor}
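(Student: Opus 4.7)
The plan is to prove the contrapositive: any cofinal family $\mc{D}\subseteq\langle{^{{}^\kappa\lambda}\kappa},\le\rangle$ has size at least $2^\lambda$. Since $\lambda^\kappa=\lambda$ gives $|{^\kappa\lambda}|=\lambda$, the partial orders $\langle{^\lambda\kappa},\le\rangle$ and $\langle{^{{}^\kappa\lambda}\kappa},\le\rangle$ are isomorphic under everywhere domination, so I can work with the latter; combined with the trivial bound $\cf({^\lambda\kappa})\le 2^\lambda$, this will close the equality.

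For each $A\subseteq\lambda$ let $f_A\colon{^\kappa\lambda}\to\kappa$ be the function supplied by the Main Theorem, and by cofinality fix some $g_A\in\mc{D}$ dominating $f_A$. The inner model $M_A := L[g_A,{^\kappa\lambda}]$ is a transitive (proper-class) model of $\zf$ containing $g_A$, and by transitivity ${^\kappa\lambda}\subseteq M_A$. The Main Theorem then forces $A\in M_A$, so the map $A\mapsto g_A$ has the property that $A\in L[g_A,{^\kappa\lambda}]$ for every $A\subseteq\lambda$.

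Since $|\tc(\{g_A,{^\kappa\lambda}\})|=\lambda$, the parameters can be coded as a subset $Y\subseteq\lambda$, so $M_A = L[Y]$ for such a $Y$. The standard condensation argument for $L[Y]$ with $Y\subseteq\lambda$ gives $\p(\lambda)\cap L[Y]\subseteq L_{\lambda^+}[Y]$, a set of $V$-cardinality $\lambda^+$; hence $|\p(\lambda)\cap M_A|\le\lambda^+$. Applying pigeonhole to the covering $\p(\lambda) = \bigcup_{g\in\mc{D}}(\p(\lambda)\cap L[g,{^\kappa\lambda}])$ yields
\[ 2^\lambda \;\le\; |\mc{D}|\cdot\lambda^+. \]

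Finally, the diagonalization $f(\alpha):=g_\alpha(\alpha)+1$ defines a legitimate element of ${^\lambda\kappa}$ because $\kappa$, being an infinite cardinal, is a limit ordinal; this shows $\cf({^\lambda\kappa})>\lambda$, so $|\mc{D}|\ge\lambda^+$ and the factor $\lambda^+$ is absorbed, giving $|\mc{D}|\ge 2^\lambda$. The only step beyond bookkeeping is invoking the correct "GCH at $\lambda$" statement inside $L[Y]$ for $Y\subseteq\lambda$, which is classical; everything else is directly delivered by the Main Theorem.
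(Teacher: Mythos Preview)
Your proof is correct and takes a genuinely different route from the paper's. The paper argues by contradiction from a putative small dominating family $\mc{A}$: it builds a \emph{single} elementary substructure $E \prec H(\theta)$ of size $<2^\lambda$ with $\mc{A} \cup {^\kappa\lambda} \cup \lambda \subseteq E$, transitively collapses to a model $M$, picks any $A \in \p(\lambda)\setminus M$, and invokes the Main Theorem once to produce an $f$ undominated by every $g \in \mc{A} \subseteq M$. You instead run the argument forward: to each $g \in \mc{D}$ you attach an inner model $L[Y_g]$ (with $Y_g \subseteq \lambda$ coding $g$ together with ${^\kappa\lambda}$), use condensation to bound $|\p(\lambda)\cap L[Y_g]|\le\lambda^+$, and then count, finishing with the separate diagonalization bound $\cf({^\lambda\kappa})\ge\lambda^+$ to absorb the $\lambda^+$ factor. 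The paper's approach is shorter and needs only L\"owenheim--Skolem and Mostowski, with no appeal to condensation in $L[Y]$ and no auxiliary lower bound; your approach makes the quantitative content explicit (each single dominator can ``know'' at most $\lambda^+$ subsets of $\lambda$) and aligns more directly with the constructibility interpretation the paper highlights in its final section. One minor point of exposition: the sentence ``$M_A := L[g_A,{^\kappa\lambda}]$ \ldots\ containing $g_A$, and by transitivity ${^\kappa\lambda}\subseteq M_A$'' is a bit quick, since $L[X]$ need not literally contain $X$ for arbitrary sets $X$; but you immediately repair this by passing to the coded $Y\subseteq\lambda$, so there is no actual gap.
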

\begin{proof}

Since there is a bijection between
 $\lambda$ and ${^\kappa \lambda}$,
 it suffices to show that the cofinality of the set
 of all functions from ${^\kappa \lambda}$ to $\kappa$
 is at least $2^\lambda$.
Consider an arbitrary family $\mc{A}$ of functions from
 ${^\kappa \lambda}$ to $\kappa$ of size $< 2^\lambda$.
We will show that it is not dominating.

Let $$Z := \mc{A} \cup {^{\kappa}\lambda} \cup \lambda.$$
Let $E \prec V$\footnote{Instead of using $V$, we could use
 $H(\theta)$ for some appropriate regular cardinal $\theta$.}
 be such that $Z \subseteq E$ and
 $|E| < 2^\lambda$.
Such an $E$ exists because
 $$|Z| = \max\{ |\mc{A}|, \lambda \} < 2^\lambda.$$
Let $M := \pi(E)$ be the transitive collapse of $E$.
Since ${^{\kappa}\lambda} \subseteq E$, we have
 $\pi(g) = g$ for all $g \in \mc{A}$.
Hence,
 $\mc{A} \subseteq M$.
Since $|M| < 2^\lambda$,
 there is some $A \in \p(\lambda) \m M$.
We may now apply the main lemma to get that
 there is some $f$ not dominated by any member of $M$.
In particular, such an $f$ is not dominated by any
 member of $\mc{A}$.
This completes the proof.
\end{proof}





Notice that the main theorem
 required ${^\kappa \lambda} \subseteq M$.
That is, $M$ contains the domains of
 the functions involved.
Dropping the requirement not only weakens
 the conclusion in the obvious way,
 but it also forces us to consider the possibility
 that the set $B$ has no length $\kappa$ branch in $M$,
 which breaks the proof.
We can remove the ${^\kappa \lambda} \subseteq M$ assumption
 if we make various special modifications, and we will
 present several of them.
In the case that $\kappa = \omega$,
 we can use
 the fact that well-foundedness of trees is absolute.
This next theorem can be remembered as
 ``for every $A \subseteq \lambda$ there is a function
  $f$ from ${^\omega \lambda}$ to $\omega$ such that $A$
  is constructible from $\lambda$ and any $g$
  which dominates $f$''.
\begin{theorem}
\label{omega_theorem}
For every $A \subseteq \lambda$ there is a function
 $f: {^\omega \lambda} \to \omega$ such that whenever
 $M$ is a transitive model of $\zf$ such that
 $\lambda \in M$ and some
 $g : ({^\omega \lambda})^M \to \omega$ in $M$
 satisfies
 $$(\forall x \in ({^\omega \lambda})^M) f(x) \le g(x),$$
 then $A \in M$.
\end{theorem}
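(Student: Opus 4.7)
The plan is to mimic the proof of the Main Theorem as far as possible, and to handle the fact that the branch $x$ constructed in that proof no longer needs to lie in $M$ by invoking absoluteness of well-foundedness for trees of height $\omega$.

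First I would define $f$ by the same formula as in the proof of the Main Theorem. Given a putative counterexample consisting of a transitive $M \models \zf$ with $\lambda \in M$, $A \notin M$, and some $g \in M$ with $g : ({^\omega \lambda})^M \to \omega$ satisfying $f(x) \le g(x)$ for all $x \in ({^\omega \lambda})^M$, I would work inside $M$ to define
$$B := \{ t \in {^{<\omega}\lambda} : g(x) \ge \dom(t) \text{ for all } x \in ({^\omega \lambda})^M \text{ extending } t \}$$
and then its initial-segment closure
$$T := \{ s \in {^{<\omega}\lambda} : s \restriction k \in B \text{ for all } k \le \dom(s) \}.$$
Both $B$ and $T$ are definable in $M$ from $g$ and $\lambda$, hence members of $M$, and $T$ is by construction a tree on $\lambda$.

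The next step is to derive a contradiction by showing simultaneously that $T$ is well-founded as viewed in $M$, and that $T$ has an infinite branch as viewed in $V$. For the first claim: any infinite branch $y$ of $T$ lying in $M$ would belong to $({^\omega \lambda})^M$, whereupon the definition of $B$ would force $g(y) \ge n$ for every $n$, which is impossible since $g(y) \in \omega$. For the second claim I would imitate the recursive construction from the Main Theorem, building $a_0, a_1, \ldots \in \lambda \setminus A$ with $\langle a_0, \ldots, a_{n-1} \rangle \in T$ for every $n$. At the successor stage, every $a \in A$ satisfies $\langle a_0, \ldots, a_{n-1}, a \rangle \in B$ (because any extension in $({^\omega \lambda})^M$ has $f$-value exactly $n+1$, so $g$-value at least $n+1$), hence in $T$; and if only such $a \in A$ worked, then the equation $A = \{ a \in \lambda : \langle a_0, \ldots, a_{n-1}, a \rangle \in B \}$ would exhibit $A$ as an element of $M$, contrary to hypothesis.

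To close the argument I would invoke the absoluteness of well-foundedness of trees of height $\omega$ between transitive models of $\zf$: since $T \in M$ and $M$ sees $T$ as well-founded, $M$ produces a rank function $\rho : T \to \ord$, which remains a rank function in $V$, so $V$ also sees $T$ as well-founded, contradicting the infinite branch just constructed. The main obstacle, and the reason the hypothesis $\kappa = \omega$ is essential, is precisely this absoluteness step: a branch of length $\omega$ through a tree on $\lambda$ is coded by an $\omega$-sequence and the existence-of-a-rank-function reformulation transfers cleanly between $M$ and $V$, whereas for $\kappa > \omega$ a putative branch of length $\kappa$ need not lie in $M$ and this technique has no direct analogue.
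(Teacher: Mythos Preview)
Your proposal is correct and follows essentially the same approach as the paper: define $f$ as in the Main Theorem, form $B$ and its downward closure $T$ inside $M$, build a branch of $T$ in $V$ by the recursive construction using $A \notin M$, and then invoke absoluteness of well-foundedness for trees of height $\omega$ to reach a contradiction. The only cosmetic difference is that you phrase the absoluteness step as ``well-founded in $M$ implies well-founded in $V$'' via a rank function, whereas the paper phrases it as ``ill-founded in $V$ implies a branch in $M$''; these are the same argument.
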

\begin{proof}
Fix $A \subseteq \lambda$.
Define $f$ as is done in the main theorem.
Let $M$ be a transitive model of $\zf$
 such that $\lambda \in M$ and which contains
 some appropriate $g$.
Assume, towards a contradiction, that $A \not\in M$.
Define the set $B \subseteq {^{<\omega}\lambda}$
 as is done in the proof of Theorem~\ref{maintheorem},
 except that the $x$ in the definition ranges
 over elements of $(^\omega \lambda)^M$ extending $t$.
We have $B \in M$.
Let $T \subseteq {^{<\omega} \lambda}$ be those
 elements of $B$ all of whose initial segments
 are also in $B$.
Note that $T \in M$.
We may repeat the proof of the main lemma to get
 that $T$ has a path in $V$.
Since well-foundedness is absolute,
 there is some $x' \in [T] \cap M$.
This $x'$ witnesses that
 $(\forall n \in \omega)\, g(x') \ge n$,
 which is a contradiction.

\end{proof}

If $\kappa > \omega$,
 there does not seem to be an obvious way to extend the
 last lemma, even if $\kappa$ has some
 large cardinal property.
However, if we are willing to sacrifice the sharpness of
 ``a function $f$ such that all dominators of $f$ can
 \textit{construct} $A$'', then we can use an elementary
 substructure argument to fill in the part of the proof
 where we need $B$ to have a length $\kappa$ path in $M$.
We also need the substructure $M$ to include ${^{<\kappa} \lambda}$
 for technical reasons which seem unavoidable.
We state the next theorem but do not prove it,
 as it is very similar to the ones above.

\begin{theorem}
For every $A \subseteq \lambda$ there is a function
 $f : {^\kappa \lambda \to \kappa}$ such that
 whenever $\langle M, \in \rangle \prec V$ is such that
 ${^{<\kappa}\lambda} \subseteq M$ and some
 $g : {^\kappa \lambda} \to \kappa$ in $M$ satisfies
 $$(\forall x \in {^\kappa \lambda})\, f(x) \le g(x),$$
 then $A \in M$.
\end{theorem}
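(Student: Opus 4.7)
The plan is to reprise the proof of Theorem~\ref{maintheorem} almost word for word, with the elementarity $M \prec V$ replacing the transitivity of $M$ and with ${^{<\kappa}\lambda} \subseteq M$ taking over the role previously played by ${^\kappa \lambda} \subseteq M$. I would fix the same $f$ as in the main theorem and suppose for contradiction that $A \notin M$ while some $g \in M$ dominates $f$ everywhere on ${^\kappa \lambda}$. Define $B \subseteq {^{<\kappa}\lambda}$ by the same formula as before. Elementarity, together with $g \in M$ (and the standing parameters $\kappa, \lambda \in M$), implies $B \in M$: for any $t \in {^{<\kappa}\lambda} \subseteq M$, the statement ``$t \in B$'' is the value of a formula with parameters in $M$, so $M$ and $V$ agree on it, and the set $B$ is the unique $M$-witness to its defining existential.

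I would then run the same recursion to build a sequence $x \in {^\kappa \lambda}$ with $x \restriction \alpha \in B$ and $x(\alpha) \notin A$ for every $\alpha < \kappa$. The key step that relies on the new hypothesis ${^{<\kappa}\lambda} \subseteq M$ is that at stage $\alpha$ the partial sequence $s = \langle a_\beta : \beta < \alpha \rangle$ has length $< \kappa$ and hence lies in $M$. Every $a \in A$ satisfies $s^\frown \langle a \rangle \in B$ by the choice of $f$; if no $a \notin A$ did, we would have $A = \{a \in \lambda : s^\frown \langle a \rangle \in B\}$, a set visibly definable in $M$ from $s, B, \lambda$, contradicting $A \notin M$. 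So the recursion proceeds through every $\alpha < \kappa$ and produces an $x \in {^\kappa \lambda}$ (living in $V$, not necessarily in $M$) with $g(x) \ge \alpha$ for every $\alpha < \kappa$, contradicting $g(x) \in \kappa$.

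The only real obstacle is precisely the one identified in the remark preceding the theorem: the partial sequences must stay inside $M$ for the definability argument to keep firing at each stage, which is why the hypothesis ${^{<\kappa}\lambda} \subseteq M$ appears and cannot be dropped in this formulation. Once it is assumed, no length-$\kappa$ branch of the tree built from $B$ needs to live in $M$ — only the initial segments do — so the well-foundedness absoluteness trick that Theorem~\ref{omega_theorem} needs to fabricate a branch inside $M$ is not required here, and the argument reduces cleanly to bookkeeping inside the elementary substructure.
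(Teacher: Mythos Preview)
Your proposal is correct and follows exactly the template the paper indicates: it omits a written proof of this theorem, saying only that it is ``very similar to the ones above'' with elementarity handling the step where the main theorem needed a length-$\kappa$ branch of $B$ inside $M$, and that ${^{<\kappa}\lambda}\subseteq M$ is needed for technical reasons --- precisely the two points your argument isolates and exploits. Your observation that the branch $x$ can stay in $V$ (since by elementarity $g$ really is a total function ${^\kappa\lambda}\to\kappa$) is a clean way to read the paper's hint; one could alternatively reflect the existence of a branch down into $M$ via $M\prec V$, but that extra step is unnecessary and your route is the more direct one.
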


If we assume that $\kappa$ is weakly compact in $M$,
 then we can build a function
 from ${^\kappa 2}$ to $\kappa$ that can only
 be dominated by a function in $M$ if
 $A \in M$:

\begin{theorem}
For every $a \in {^\kappa 2}$ there is a function
 $f : {^\kappa 2} \to \kappa$ such that
 whenever $M$ is a transitive model of $\zf$ such that
 $\kappa \in M$,
 ${^{<\kappa} 2} \subseteq M$,
 $(\kappa$ is weakly compact$)^M$,
 and some $g : ({^\kappa 2})^M \to \kappa$ in $M$ satisfies
 $$(\forall x \in ({^\kappa 2})^M)\, f(x) \le g(x),$$
 then $a \in M$.
\end{theorem}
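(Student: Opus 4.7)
The plan is to mirror the proof of Theorem~\ref{omega_theorem}, with the tree property for $\kappa$ (holding inside $M$ because $(\kappa$ is weakly compact$)^M$) replacing the absoluteness of well-foundedness used there for $\omega$. Since ${^{<\kappa} 2} \subseteq M$ and $\kappa$ is inaccessible in $M$, we have $(2^\alpha)^M < \kappa$ for every $\alpha < \kappa$, so any subtree of ${^{<\kappa} 2}$ of height $\kappa$ is a $\kappa$-tree in $M$ and the tree property applies.

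I would define $f: {^\kappa 2} \to \kappa$ by letting $f(x)$ be the least $\alpha < \kappa$ with $x(\alpha) \ne a(\alpha)$ if such an $\alpha$ exists, and $f(x) = 0$ otherwise. Assuming toward contradiction that $a \notin M$ and that some $g \in M$ satisfies $g(x) \ge f(x)$ for every $x \in ({^\kappa 2})^M$, I would form, in the spirit of Theorem~\ref{maintheorem},
$$B := \{ t \in {^{<\kappa} 2} : g(x) \ge \dom(t) \text{ for every } x \in ({^\kappa 2})^M \text{ extending } t \}$$
and let $T$ consist of those $t \in {^{<\kappa} 2}$ all of whose initial segments lie in $B$. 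Both $B$ and $T$ belong to $M$, since $g$, $\kappa$, and $({^\kappa 2})^M$ do, and ${^{<\kappa} 2} \subseteq M$.

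The key observation is that every initial segment $a \upharpoonright \alpha$ lies in $T$: any $x \in ({^\kappa 2})^M$ extending $a \upharpoonright \alpha$ is distinct from $a$ (since $a \notin M$), so the first place where $x$ disagrees with $a$ is some $\beta \ge \alpha$, giving $g(x) \ge f(x) = \beta \ge \alpha$. Hence $T$ is a tree of height $\kappa$ in $M$ with all levels of size $< \kappa$ in $M$, so it is a $\kappa$-tree in $M$, and the tree property inside $M$ yields a branch $b \in M$ of $T$. Then $b \in ({^\kappa 2})^M$ extends $b \upharpoonright \alpha \in B$ for each $\alpha < \kappa$, forcing $g(b) \ge \alpha$ for all $\alpha < \kappa$, which contradicts $g(b) \in \kappa$.

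The main subtlety is that the branch $b$ produced by the tree property need not equal $a$; at first glance this appears to threaten the argument, since one might expect to have to recover $a$ itself from the branch. The resolution, and the reason the argument closes cleanly, is that \emph{any} branch of $T$ lying in $M$ already yields the contradiction $g(b) \ge \kappa$, so one never needs to identify $b$ with $a$.
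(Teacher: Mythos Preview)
Your proof is correct and follows essentially the same route as the paper: the same definition of $f$, the same sets $B$ and $T$, the observation that $a\restriction\alpha\in T$ for every $\alpha<\kappa$ so that $T$ is a $\kappa$-tree in $M$, and then the tree property in $M$ to produce a branch $b$ with $g(b)\ge\alpha$ for all $\alpha<\kappa$. Your write-up is in fact more explicit than the paper's about why $a\restriction\alpha\in B$ (the paper glosses over this step), and your closing remark that the branch need not equal $a$ is a helpful clarification.
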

\begin{proof}
Fix $\kappa$ and $a \in {^\kappa 2}$.
Let $f$ be the function
 $$f(x) := \begin{cases}
 \alpha & \mbox{if } x(\alpha) \not= a(\alpha) \mbox{ but }
  (\forall \beta < \alpha)\, x(\beta) = a(\beta),\\
 0 & \mbox{if } x = a.
 \end{cases}$$
Let $M$ and $g$ be as in the statement
 of the theorem.
Assume, towards a contradiction, that
 $a \not\in M$.
Define $B$ similarly as before:
 $$B := \{ t \in {^{<\kappa} 2} : g(x) \ge \dom(t)
 \mbox{ for all } x \in (^{\kappa} 2)^M
 \mbox{ extending } t\}.$$
We have $B \in M$.
Just as in Theorem~\ref{omega_theorem},
 define $T \subseteq {^{< \kappa} 2}$ to be 
 the set of elements of $B$ all of whose initial segments
 are also in $B$.
We have $T \in M$.
Note also that
 since $(\kappa$ is strongly inaccessible$)^M$,
 we have $(T$ is a $\kappa$-tree$)^M$.

Now, since $a \not\in M$,
 one can see that $a \restriction \alpha$
 is in $M$ for each $\alpha < \kappa$.
In particular, $T$ has height $\kappa$.
This is calculated in $V$,
 but it is clearly absolute, so
 $(T$ has height $\kappa)^M$.
Since $(T$ is a $\kappa$-tree$)^M$,
 there must be some
 $x' \in [T] \cap M$.
As in Theorem~\ref{omega_theorem},
 this $x'$ witnesses that
 $(\forall \alpha < \kappa)\, g(x') \ge \alpha$,
 which is a contradiction.
\end{proof}

\section{A Consequence}

We can use the main theorem of the last section
 to obtain a result that is incomparable to
 Proposition~\ref{conjunct_prop}.
In the language of \cite{Blass1},
 this proposition is saying there exists a
 morphism between two relations.
We get this result because the main
 theorem of last section is really saying that
 a morphism exists between a certain domination
 relation and the constructibility relation:

\begin{proposition}
Let $\langle P, \le_P \rangle$
 be a partial ordering and let
 $\kappa$ be the smallest size of
 an unbounded subset of $P$
 ($\kappa = \mf{b}\langle P, \le_P \rangle$).
Let $\lambda$ be a cardinal and assume
 $|P| \le 2^\lambda$.
Let $\mc{F} :=
 \langle {^\lambda P}, \le_{^\lambda P} \rangle$
 be the partial ordering of all functions
 from $\lambda$ to $P$ given by
 $f \le_{^\lambda P} g$ iff
 $$(\forall \alpha < \lambda) f(\alpha) \le_P g(\alpha).$$
Assume also that $\lambda^\kappa = \lambda$.
Then $\cf(\mc{F}) = 2^\lambda$.
Moreover, there is a function
 $\phi^- : {^\lambda \kappa} \to {^\lambda P}$
 and there is a function
 $\phi^+ : {^\lambda P} \to {^\lambda \kappa}$
 such that
 $$(\forall g \in {^\lambda \kappa})
 (\forall f \in {^\lambda P})\,
 \phi^-(g) \le_{^\lambda P} f \Rightarrow g \le \phi^+(f).$$
\end{proposition}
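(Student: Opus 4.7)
The plan is to construct the morphism $(\phi^-,\phi^+)$ explicitly by encoding $\kappa$ as an increasing unbounded chain in $P$, and then to derive the cofinality claim from the existence of this morphism together with Corollary~\ref{main_cor}. First I would observe that $\kappa = \mf{b}\langle P, \le_P\rangle$ being an infinite cardinal forces every subset of $P$ of size strictly less than $\kappa$ to be bounded; in particular $P$ is $\kappa$-directed. Starting from any unbounded $\{q_\alpha : \alpha < \kappa\} \subseteq P$, I would recursively choose $p_\alpha \in P$ to be a common upper bound of $\{q_\alpha\} \cup \{p_\beta : \beta < \alpha\}$, which has fewer than $\kappa$ elements and hence a bound. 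The result is an increasing $\kappa$-chain $\langle p_\alpha : \alpha < \kappa \rangle$ which is itself unbounded in $P$, since it dominates $\langle q_\alpha \rangle$.

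Next I would use this chain to define the morphism. For each $q \in P$ the chain being increasing and unbounded forces $S_q := \{\alpha < \kappa : p_\alpha \le_P q\}$ to be a proper initial segment of $\kappa$, so $\sigma(q) := \sup S_q$ lies in $\kappa$. Set $\phi^-(g)(\alpha) := p_{g(\alpha)}$ for $g \in {^\lambda \kappa}$ and $\phi^+(f)(\alpha) := \sigma(f(\alpha))$ for $f \in {^\lambda P}$. If $\phi^-(g) \le_{^\lambda P} f$ then $p_{g(\alpha)} \le_P f(\alpha)$ for every $\alpha$, so $g(\alpha) \in S_{f(\alpha)}$ and hence $g(\alpha) \le \sigma(f(\alpha)) = \phi^+(f)(\alpha)$, which verifies the morphism property. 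From this morphism the cofinality claim follows: the image under $\phi^+$ of any cofinal $\mc{D} \subseteq \mc{F}$ is cofinal in $({^\lambda \kappa}, \le)$ and has the same cardinality, so $\cf(\mc{F}) \ge \cf({^\lambda \kappa}) = 2^\lambda$ by Corollary~\ref{main_cor}; the matching upper bound $\cf(\mc{F}) \le |P|^\lambda \le (2^\lambda)^\lambda = 2^\lambda$ uses only the hypothesis $|P| \le 2^\lambda$.

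The main obstacle is ensuring that $\sigma(q) < \kappa$ for every $q \in P$, i.e., that each $q$ bounds only boundedly many terms of the sequence. An arbitrary unbounded subset of $P$ of size $\kappa$ need not have this property: it could contain a cofinal sub-chunk all dominated by a single $q$. The increasing-chain construction circumvents this, and it is exactly here that one uses $\mf{b}(P) = \kappa$ in the strong form that \emph{every} sub-$\kappa$ set is bounded. Once the chain is built, verifying the morphism property and the cofinality equality is routine.
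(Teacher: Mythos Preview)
Your proof is correct and follows essentially the same route as the paper: build an increasing unbounded $\kappa$-chain in $P$ (the paper says ``using induction'' where you spell out the $\kappa$-directedness argument), set $\phi^-(g)(x)=p_{g(x)}$, and define $\phi^+$ by measuring how far along the chain each value $f(x)$ reaches. The only cosmetic difference is that the paper takes $\phi^+(f)(x)=\min\{\alpha:a_\alpha\not\le_P f(x)\}$ while you take $\sup\{\alpha:p_\alpha\le_P f(x)\}$; both work for the same reason, and the derivation of $\cf(\mc{F})=2^\lambda$ from the morphism plus Corollary~\ref{main_cor} is identical.
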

\begin{proof}
Since $|P| \le 2^\lambda$, we have
 $|{^\lambda P}| \le (2^\lambda)^\lambda = 2^\lambda$, so
 $\cf(\mc{F}) \le 2^\lambda$.
By Corollary~\ref{main_cor},
 since $\lambda^\kappa = \lambda$, we have
 $\cf( {^\lambda \kappa} ) = 2^\lambda$.
Once we define the appropriate functions
 $\phi^-$ and $\phi^+$,
 it will follow that
 $\cf(^\lambda \kappa) \le \cf(\mc{F})$,
 and so $\cf(\mc{F}) = 2^\lambda$.

Using induction, we can construct
 an unbounded chain $\langle a_\alpha : \alpha < \kappa \rangle$
 in $\langle P, \le_P \rangle$.
That is, $(\forall \alpha < \beta < \kappa)\,
 a_\alpha \le_P a_\beta$
 and there is no $a \in P$ such that
 $(\forall \alpha < \kappa)\, a_\alpha \le_P a$.
Let $\phi^+ : {^\lambda P} \to {^\lambda \kappa}$
 be the function
 $$\phi^+(f) := (x \mapsto
 \min\{ \alpha < \kappa : a_\alpha \not\le_P f(x) \}).$$
Let $\phi^- : {^\lambda \kappa} \to {^\lambda P}$
 be the function
 $$\phi^-(g) := (x \mapsto a_{g(x)}).$$
These functions are as desired.
\end{proof}

As an example of how to use this proposition,
 let $P$ be the set of Lebesgue measure
 zero Borel subsets of $\mbb{R}$.
Let $A \le_P B$ iff $A \subseteq B$.
A straightforward argument shows
 that the smallest size $\kappa$
 of an unbounded subset of
 $P$ is $\add(\mc{L})$
 (the additivity of Lebesgue measure,
 which appears in Cicho\'n's Diagram).
Let $\lambda = 2^\omega$.
Since every element of $P$ is Borel,
 $|P| = \lambda$, but we need only that
 $|P| \le 2^\lambda$.
If $\lambda^\kappa = \lambda$,
 then $\cf(\mc{F}) = 2^\lambda$.
That is,
 if $2^{\add(\mc{L})} = 2^\omega$,
 then $\cf({\mc{F}}) = 2^{2^\omega}$.

\section{Interpretation of Main Theorem}

Within this section,
 whenever $B_1, ..., B_n$ are sets,
 let $M(B_1, ..., B_n)$ refer to the smallest
 transitive model of $\zf$ which contains
 $B_1, ..., B_n$ as elements.
This is well-defined, although
 it could be a proper class.
To say that $A \in M(B_1, ..., B_n)$
 is to say that $A$ is constructible
 from $B_1, ..., B_n$ in a certain sense.

The main theorem can now been seen as
 the following statement:
 $$(\forall A \subseteq \lambda)
 (\exists f: {^\kappa \lambda} \to \kappa)
 (\forall g: {^\kappa \lambda} \to \kappa)
 [ f \le g \Rightarrow A \in M(g, {^\kappa \lambda}) ].$$
In the language of \cite{Blass1},
 this is saying there is a morphism from the
 domination relation of functions from
 ${^\kappa \lambda}$ to $\kappa$ to
 the relation $R$ defined by
 $A R g :\Leftrightarrow A \in M(g, {^\kappa \lambda})$.

This is analogous to the situation
 with functions from $\omega$ to $\omega$.
There, however, we have only
 that for every \textit{hyperarithmetical}
 $A \subseteq \omega$,
 there is a function $f: \omega \to \omega$ such that
 if a function $g: \omega \to \omega$ everywhere dominates $f$,
 then $A$ is Turing reducible to $g$.
Of course, changing ``everywhere dominates'' to
 ``eventually dominates''
 makes no difference, because making finite modifications
 to a function does not change its Turing degree.
See \cite{Blass2},
 \cite{Solovay}, and
 \cite{Jockusch} for more details.

\end{document}